\theoremstyle{definition}
\newtheorem{Lemma}{Lemma}
\newtheorem{Theorem}{Theorem}
\newtheorem{Proposition}{Proposition}
\newtheorem{Corollary}{Corollary}
\definecolor{ColorEdward}{rgb}{0.4,0.6,0.7}
\definecolor{ColorNolbert}{rgb}{0.2,0.5,0.1}
\def\R{\mathbb{R}}
\title[Extended Quasispecies with Time Lags and Fluctuations in replication]{Generalized Quasispecies Model with Time Delays and Periodic Fluctuations in Replication}
\author[N. Morales and E. Turner]{{Nolbert Morales$^{1}$\orcidlink{0009-0009-8114-1537} \MakeLowercase{and} Edward A. Turner$^{2}$\orcidlink{0000-0002-2959-9227}}}
\thanks{email: $^1$\texttt{nolbert.morales@uss.cl}, $^2$\texttt{edward.turner@uvm.cl}}
\address{$^{1}${\small Universidad San Sebasti\'an, Facultad de Ingenier\'ia,\\ Lago Panguipulli 1390, Puerto Montt, Chile}}
\address{$^{2}${\small Universidad Viña del Mar, Facultad de Ciencias Jurídicas, Sociales y de la Educación,\\ Viña del Mar, Chile}}
\subjclass{92D25, 47H11, 37N25}
\keywords{Modified quasispecies, delay differential equations, periodic solution, degree theory.}
\begin{document}
\begin{abstract}
In this research, we present a generalized quasispecies model in which population growth is governed by an arbitrary nonlinear function incorporating time delays.
We begin by demonstrating that, under the constant population constraint, the dynamics of the system with time delays remain confined to the invariant manifold for both forward and backward time evolution. Furthermore, we establish that in this modified quasispecies model, defined on a single-peak fitness landscape, in the presence of backward mutation and periodic fluctuations in replication rates and in replication probabilities,  the concentration of the \(i\)th replicating species exhibits a periodic behavior in time  independent of the magnitude of the time delays. Specifically, this concentration oscillates between the minimum and maximum values of the probabilities \(Q_{ji}\) associated with erroneous replication; that is, the probability that a mutated replicator of type \(j\) produces an offspring of type \(i\). Moreover, under the presence of time delays and non-constant periodic fluctuations in replication rates, we show that if the probability that a mutated replicator of type \(j\) produces an offspring of type \(i\) remains constant across all replicators, then the unique positive periodic solution is necessarily a constant solution.
\end{abstract}

\maketitle

\section{Introduction}

The quasispecies framework was introduced in the 1970's by Manfred Eigen~\cite{Eigen1971} and later developed in collaboration with Peter Schuster~\cite{Eigen1988}, with the aim of describing the behavior of replicating entities under high mutation rates. Originally formulated to address questions related to prebiotic evolution, the theory has since been broadened to encompass diverse biological systems exhibiting elevated mutational dynamics. These include RNA viruses~\cite{Mas2004,Perales2020,Revull2021}, as well as cancer cells characterized by pronounced genetic instability~\cite{Sole2003,Sole2004,Brumera2006}.

Empirical research on quasispecies dynamics have predominantly centered on RNA viruses infecting bacteria, animals, and plants. The relevance of these studies to RNA-based genetics was extensively documented in the pioneering reviews by Domingo and colleagues~\cite{Domingo1985,Domingo1988}, which provided a foundational understanding of the experimental underpinnings of the theory.  

The presence of a mutant spectrum in the nature of RNA viruses was first confirmed through clonal sequencing of bacteriophage Q$\beta$, which revealed significant genetic diversity even when the infection started from a single virion~\cite{Domingo1978}. Since then, genetically diverse viral populations have been observed in many RNA viruses, including foot-and-mouth disease virus~\cite{Domingo1980,Sobrino1983}, vesicular stomatitis virus~\cite{Holland1979,Holland1982}, hepatitis B and C viruses~\cite{Martell1992,Davis1999,Mas2004,Perales2020} and others. These findings confirm that high genetic variability within viral populations is a common feature of RNA virus replication.

Beyond virology, the classical quasispecies model relies on simplifying assumptions: infinite population size, uniform replication rates, linear growth dynamics, and static environments. Modern extensions have sought to introduce more realism by incorporating finite population effects~\cite{Nowak1989,Sardanyes2008}, stochasticity~\cite{Sardanyes2011,Ari2016}, spatial structure, recombination, and complementation~\cite{Sardanyes2010}. These models reveal complex phenomena absent from the original formulation, such as the ``survival-of-the-flattest'' effect, in which mutational robustness can outweigh raw replication rate under high error loads~\cite{Wilke2001,Wilke2001b}.

Despite significant progress in the development of quasispecies models, certain biologically relevant factors remain underexplored. For instance, periodic replication rates—driven by circadian rhythms or environmental fluctuations such as temperature changes—are especially pertinent in plant systems~\cite{Obreepalska2015,Honjo2020}. Moreover, traditional quasispecies theory and most RNA virus replication models typically assume instantaneous genome synthesis. However, recent experimental findings indicate that the SARS-CoV-2 replication machinery operates with an elongation speed of approximately 150 to 200 nucleotides per second~\cite{Campagnola2022}. This rate is more than twice that of the poliovirus RNA polymerase. Based on these estimates, a complete SARS-CoV-2 genome of roughly 29.9 kilobases can be synthesized within 2.5 to 3.32 minutes, whereas the poliovirus genome (approximately 7.4 kilobases) is replicated in about 1.5 minutes.
In this work, we present a generalized quasispecies model that incorporates biologically motivated extensions: arbitrary nonlinear growth, time delays, and periodic replication rates.
This model is defined over a single-peak fitness landscape and governed by a system of delay differential equations. 
We analyze its dynamics under a constant population constraint, showing that the system's trajectories remain confined to an invariant manifold in both forward and backward time. Furthermore, by applying tools from the theory of functional differential equations and topological degree theory (specifically the Leray–Schauder degree), we demonstrate the existence of periodic solutions even in the presence of backward mutation and regardless of the magnitude of the delays. These findings extend the classical quasispecies paradigm and highlight how realistic temporal structure influences evolutionary dynamics.

This work is structured as follows. In Section~\ref{sec:model}, we formulate a generalized quasispecies model describing the dynamics of $n$ interacting populations under nonlinear growth, time delays, and periodic replication rates. We show that the classical constant population (CP) constraint defines an invariant manifold for the delayed system.
Section~\ref{sec:results} we consider a quasispecies model defined on a single-peak fitness landscapes $(n=1)$ and establish the existence of periodic solutions under backward mutation and time-periodic replication, regardless of the magnitude of the delay.
In Section~\ref{sec:applications}, we apply the main Theorem in two illustrative two-dimensional systems (constant and exponential growth), obtaining explicit conditions for the existence of periodic solutions.

\section{A generalized quasispecies model with time lags and periodic fitness}\label{sec:model}
While the classical quasispecies model has provided foundational insights into molecular evolution, it assumes constant mutation and replication rates, and overlooks key factors such as growth limitations and environmental variability. These simplifications limit its applicability over extended time frames. In this section, we introduce a modified version of the model that incorporates time delays, periodic fluctuations in replication rates, and the nonlinearity of population growth, thereby offering a more realistic framework for capturing evolutionary dynamics under temporally varying conditions.

\subsection{On the quasispecies model} 
In~\cite{TurnerQuasi1}, the authors extend the classical quasispecies framework by incorporating the effects of time delays and periodic fluctuations in the replication process of RNA genomes, under the assumption of constant population (CP). These modifications are designed to capture biologically relevant features such as the temporal separation between genome synthesis and other stages of the replication cycle. The resulting model is described by the following non-autonomous delay differential system:
\begin{equation}\label{eq:QuasiDelay}
	\dot x_{i}(t)=\sum_{j=0}^n f_j(t)Q_{ji}x_j(t-\tau_j)-x_{i}(t)\Phi(\mathbf x), 
\end{equation}
where $x_i(t)$ denotes the concentration of the $i$-th replicator species. The function $f_j(t)$ represents the time-periodic replication rate of the $j$-th species, and $\tau_j$ introduces a discrete time delay specific to each replicator type. The mutation transition matrix $Q_{ji}$ encodes the probability that replication of type $j$ yields an offspring of type $i$. The outflow term is defined as $\Phi(\mathbf{x}) = \sum_{j=0}^n f_j(t)x_j(t - \tau_j)$, where $\mathbf{x} = (x_0, \dots, x_n)$ is the population state vector.
The authors showed that de CP condition still remains valid with time lags and restrict the domain in
\begin{equation}\label{eq:CP}
\Sigma_{n+1}=\left\{ \mathbf x\in\mathbb R^{n+1}:x_0+x_1+\dots+x_n=1,\ x_0,\dots x_n\geq0 \right\}.
\end{equation}

In~\cite{Zapata:2022aa}, the authors propose a modified quasispecies model in which the growth rate of each population decreases as it approaches the maximum value $x_i = 1$. To capture this logistic-like regulation, the constant replication rate $f_j x_j$ is replaced by the density-dependent term $f_jx_j(1 - x_j)$. This modification is both biologically meaningful and mathematically natural, as it accounts for resource limitations and saturation effects commonly observed in real populations—features often overlooked in classical formulations.

\subsection{A general quasispecies model}
In this article, we propose a dynamic model that extends the classical quasispecies framework by introducing a broader class of nonlinearities in the replication rate. Specifically, we incorporate a functional form of the type $f_j(t) \, \psi(x_j(t - \tau_j(t)))$, where $\psi\in C([0,1],[0,1])$ such that $\psi(x_0)=0$ if and only if $x_0=0$ or $x_0=1$, as illustrated in Figure~\ref{fig:1}. Moreover, the model accounts for both time delays and periodic fluctuations in the replication dynamics, which are essential features in various biological contexts.

The proposed model is described by the following non-autonomous system of delay differential equations:
\begin{equation}\label{eq:SistPrincipal}
\dot{x}_i = \sum_{j=0}^n f_j(t) Q_{ji} (t)\psi(x_j(t - \tau_j(t))) - x_i(t) \Phi_{n+1}(\mathbf{x}),
\end{equation}
where $\tau_{j}$, $f_j$ are $T$-periodic continuous functions, $\tau_{j}(t)\geq 0$ and $f_j(t)>0$ for all $t\in\R$, the function $\Phi_{n+1}(\mathbf{x})=\sum_{j=0}^n f_j(t) \psi(x_j(t - \tau_j(t))).$
It is worth noting that if we choose $\psi(x_j) = x_j$, system~\eqref{eq:SistPrincipal} reduces to the classical quasispecies model \cite{TurnerQuasi1}, as given in \eqref{eq:QuasiDelay}. Similarly, selecting $\psi(x_j) = x_j(1 - x_j)$ produces a regulated quasispecies model with logistic growth constraints \cite{Zapata:2022aa}.

\begin{figure}
\begin{center}
\begin{tikzpicture}[scale=1]
	\node[inner sep=0pt] (caso1) at (-4.3,0) {\includegraphics[scale=0.6]{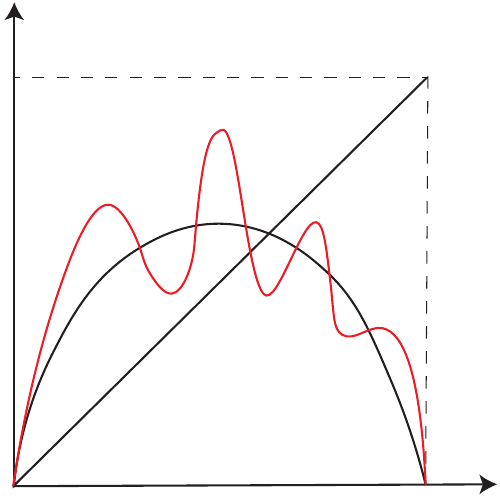}};
	\coordinate [label=below:\textcolor{black}{\small$\psi(x_i)$}] (E2) at (-7.3,2.8);
    \coordinate [label=below:\textcolor{black}{\footnotesize$\psi(x_i)=x_i$}] (E2) at (-1.5,2.1);
    \coordinate [label=below:\textcolor{black}{\footnotesize$\psi(x_i)=(1-x_i)x_i$}] (E2) at (-1,-0.2);
    \coordinate [label=below:\textcolor{black}{\small$x_i$}] (E2) at (-1.8,-2.5);
    \coordinate [label=below:\textcolor{black}{\footnotesize$0$}] (E2) at (-6.9,-2.5);
    \coordinate [label=below:\textcolor{black}{\footnotesize$1$}] (E2) at (-2.5,-2.5);
    \coordinate [label=below:\textcolor{black}{\footnotesize$1$}] (E2) at (-7,2);
\end{tikzpicture}
    \end{center}
   \captionsetup{width=\linewidth}
	\caption{The red curve depicts the nonlinear behavior of the replication ratio represented by $\psi$, while the black curves correspond to the reference cases $\psi = x_i$ and $\psi = (1 - x_i)x_i$.}
	\label{fig:1}
\end{figure}

As previously discussed, the CP condition plays a central role in governing the behavior of the quasispecies framework, particularly as it relates to the invariant manifold defined by~\eqref{eq:CP}. In the following analysis, we establish that this condition remains robust even when the system incorporates temporal delays and includes the term $\psi(x_j(t-\tau_j(t)))$. To substantiate this, our approach is twofold. First, Proposition~\ref{vari_inva} confirms that if the right endpoints of the initial function lie in the set $\Sigma_{n+1}$, the resulting trajectory will persist in $\Sigma_{n+1}$ for all future times. This implies that full inclusion of the initial function within $\Sigma_{n+1}$ is not a prerequisite. Subsequently, Proposition~\ref{SolPe} shows that any $T$-periodic solution which maintains strict positivity throughout its domain must be entirely contained within $\Sigma_{n+1}$.

\begin{Proposition}\label{vari_inva} Let $\mathbf x(t)$ be the solution of a system \eqref{eq:SistPrincipal} with initial conditions
$$\mathbf x(t)=\phi(t)=(\phi_0(t),\phi_1(t),\dotsc,\phi_n(t)),\quad \forall t\in\left[t_0-\gamma_n,t_0\right]$$
where $\gamma_n= \displaystyle\max_{t\in[0,T]}\{\tau_{j}(t):i=0,n\}$ and such that $\mathbf \phi(t_0)\in\Sigma_{n+1}$. Then $\mathbf x(t)\in\Sigma_{n+1}$ for all $t\in[t_0,+\infty[$ .
\end{Proposition}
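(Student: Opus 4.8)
The plan is to verify the two defining constraints of $\Sigma_{n+1}$ separately: first that the affine constraint $x_0+\cdots+x_n=1$ is preserved for $t\ge t_0$, and then that non-negativity of each coordinate is preserved. Throughout I work on the maximal interval of existence of the solution $\mathbf x$ emanating from $\phi$; once both constraints are established the trajectory is confined to the compact set $\Sigma_{n+1}\subset[0,1]^{n+1}$, which in turn rules out finite-time blow-up and yields global existence on $[t_0,+\infty)$.

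\emph{Step 1 (invariance of the hyperplane).} Set $S(t)=\sum_{i=0}^n x_i(t)$ and differentiate along~\eqref{eq:SistPrincipal}. Summing the equations over $i$ and interchanging the order of summation in the replication term, I would use that $Q$ is row-stochastic, i.e. $\sum_{i=0}^n Q_{ji}(t)=1$ for every $j$ and every $t$, to obtain
\begin{equation*}
\sum_{i=0}^n\sum_{j=0}^n f_j(t)Q_{ji}(t)\psi(x_j(t-\tau_j(t)))=\sum_{j=0}^n f_j(t)\psi(x_j(t-\tau_j(t)))=\Phi_{n+1}(\mathbf x).
\end{equation*}
Since $\sum_{i=0}^n x_i(t)\Phi_{n+1}(\mathbf x)=S(t)\Phi_{n+1}(\mathbf x)$, this gives the scalar linear equation $\dot S(t)=\Phi_{n+1}(\mathbf x)\bigl(1-S(t)\bigr)$. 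Writing $p(t):=\Phi_{n+1}(\mathbf x(t))$, a fixed continuous function once the solution is frozen, the integrating-factor formula yields $S(t)-1=(S(t_0)-1)\exp\bigl(-\int_{t_0}^t p(s)\,ds\bigr)$. Because $\phi(t_0)\in\Sigma_{n+1}$ forces $S(t_0)=1$, we conclude $S(t)\equiv 1$ for all $t\ge t_0$.

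\emph{Step 2 (invariance of the positive cone).} This is the delicate part. I would argue by contradiction using a first-passage time: assume some coordinate eventually becomes negative and set $t_1=\inf\{t>t_0:\min_i x_i(t)<0\}$. By continuity $\min_i x_i(t)\ge 0$ on $[t_0,t_1]$ and there is an index $k$ with $x_k(t_1)=0$. Combined with Step~1, every coordinate then lies in $[0,1]$ on $[t_0,t_1]$, and---assuming the initial data $\phi_j$ take values in the domain $[0,1]$ of $\psi$---all delayed arguments $x_j(s-\tau_j(s))$ for $s\le t_1$ lie in $[0,1]$, so $\psi$ of them is non-negative. Evaluating the $k$th equation at $t_1$, the outflow term drops out because $x_k(t_1)=0$, leaving $\dot x_k(t_1)=\sum_{j=0}^n f_j(t_1)Q_{jk}(t_1)\psi(x_j(t_1-\tau_j(t_1)))\ge 0$, since $f_j>0$, $Q_{jk}\ge 0$ and $\psi\ge0$.

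The remaining obstacle---and the genuinely DDE-specific one---is that $\dot x_k(t_1)\ge 0$ alone does not immediately forbid $x_k$ from dipping below zero, both because the inequality may be non-strict and because the delayed terms are self-referential: for $s>t_1$ with small $\tau_j(s)$ the argument $s-\tau_j(s)$ re-enters the interval $(t_1,s)$ that we are trying to control. To close this I would freeze the coefficients and write the $k$th equation in variation-of-constants form,
\begin{equation*}
x_k(t)=e^{-\int_{t_1}^t p(s)\,ds}\int_{t_1}^t e^{\int_{t_1}^s p(r)\,dr}\,h_k(s)\,ds,\qquad h_k(s):=\sum_{j=0}^n f_j(s)Q_{jk}(s)\psi\bigl(x_j(s-\tau_j(s))\bigr),
\end{equation*}
using $x_k(t_1)=0$. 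As long as the relevant delayed arguments stay in $[0,1]$ we have $h_k\ge 0$, hence $x_k(t)\ge 0$; the favourable sign of the term $-x_k\Phi_{n+1}$, which is repelling precisely when a coordinate tries to cross zero, together with a short continuity/bootstrap estimate on the window $(t_1,s)$, shows that the delayed arguments cannot render $h_k$ negative before $t_1$, contradicting the definition of $t_1$. I expect this bootstrap over the self-referential delay window to be the main technical hurdle; the rest is the routine sign bookkeeping above. Combining the two steps gives $x_i(t)\ge 0$ and $\sum_i x_i(t)=1$, that is, $\mathbf x(t)\in\Sigma_{n+1}$ for all $t\ge t_0$.
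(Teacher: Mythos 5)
Your argument follows essentially the same route as the paper: the same identity $\tfrac{d}{dt}\sum_i x_i=(1-\sum_i x_i)\sum_j f_j(t)\psi(x_j(t-\tau_j(t)))$ treated as a scalar linear equation with the delayed terms frozen (the paper phrases this via the method of steps, you via an integrating factor --- both reduce to uniqueness for a linear ODE), combined with the boundary sign condition $\dot x_k(t_1)\ge 0$ when $x_k(t_1)=0$ to keep the coordinates in $[0,1]$. The bootstrap you flag as the remaining hurdle in Step~2 (the non-strict inequality and the self-referential delayed arguments re-entering $(t_1,s)$) is precisely the point the paper dismisses with ``it is straightforward to verify\dots consequently $0\le x_i\le 1$'', so your write-up is, if anything, more explicit about where care is needed, and no essentially different idea is used or missing on either side.
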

 
\begin{proof}
Let $ \mathbf x(t) = (x_0(t), x_1(t), \dotsc, x_n(t)) \in C^1([t_0, +\infty[, \mathbb{R}^n) $ be a solution of system~\eqref{eq:SistPrincipal}. Observe that  
$$
\frac{d}{dt}\left( \sum_{i=0}^n x_i(t) \right) = \left(1 - \sum_{i=0}^n x_i(t) \right) \sum_{j=0}^n f_j(t)\psi(x_j(t - \tau_j(t))).
$$

It is straightforward to verify that if there exists $ t_1 \geq t_0 $ such that $x_i(t_1) = 1$ or $x_i(t_1) = 0$, then $\dot x_i(t_1) \leq 0$ or $\dot x_i(t_1) \geq 0$, respectively. Consequently, it follows that $0 \leq x_i(t) \leq 1$ for all $i$ and for all $t \geq t_0 $.
Now, consider $z(t) = \sum_{i=0}^n x_i(t).$ Using the method of steps applied to system~\eqref{eq:SistPrincipal}, the system reduces, for $t \in [t_0, t_0 + \gamma_n]$, to the ordinary differential equation:
\begin{equation}\label{EDO}
    \dot z(t) = (1 - z(t)) \sum_{j=0}^n f_j(t)\psi(\phi_j(t - \tau_j(t))),
\end{equation}
with initial condition $z(t_0) = 1.$ By the existence and uniqueness theorem, it follows that $z(t) = 1$ for all $t \in [t_0, t_0 + \gamma_n]$.
Applying the method of steps again for $t \in [t_0 + \gamma_n, t_0 + 2\gamma_n],$ we obtain the same differential equation~\eqref{EDO} with initial condition $z(t_0 + \gamma_n) = 1,$ leading again to $z(t) = 1$ on this interval.
Continuing this process inductively, we conclude that $z(t) = \sum_{i=0}^n x_i(t) = 1$ for all $t \geq t_0.$
\end{proof}

\begin{Proposition}\label{SolPe} Let $\mathbf x(t)=(x_0(t),x_1(t),\dotsc,x_n(t))$ be a $T$-periodic solution of a system \eqref{eq:SistPrincipal} such that $x_i(t)\in]0,1[$ for all $t\in[0,T]$. Then $\mathbf x(t)\in\Sigma_{n+1}$ for all $t\in[0,T]$.
\end{Proposition}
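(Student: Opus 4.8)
The plan is to reuse the differential identity established in the proof of Proposition~\ref{vari_inva} and combine it with the periodicity hypothesis to pin down the total mass. Writing $z(t)=\sum_{i=0}^n x_i(t)$ and $g(t)=\sum_{j=0}^n f_j(t)\,\psi(x_j(t-\tau_j(t)))$, the same summation argument (using that the columns of $Q$ sum to one) yields the scalar equation $\dot z(t)=(1-z(t))\,g(t)$. Since each $x_i$ is $T$-periodic, so is $z$; and since every $\tau_j$ and $f_j$ is $T$-periodic, the composite $g$ is $T$-periodic as well, because $x_j(t+T-\tau_j(t+T))=x_j(t-\tau_j(t))$.

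First I would substitute $w(t)=1-z(t)$, turning the identity into the linear homogeneous equation $\dot w(t)=-g(t)\,w(t)$, whose solution is $w(t)=w(0)\exp\!\left(-\int_0^t g(s)\,ds\right)$. Here $w$ inherits $T$-periodicity from $z$, so in particular $w(T)=w(0)$.

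Next I would invoke the strict positivity hypothesis $x_i(t)\in\,]0,1[$. Because $f_j(t)>0$ and $\psi(x)=0$ if and only if $x\in\{0,1\}$, each term $f_j(t)\,\psi(x_j(t-\tau_j(t)))$ is strictly positive, hence $g(t)>0$ for all $t$, and consequently $\int_0^T g(s)\,ds>0$. Evaluating the solution at $t=T$ gives $w(0)=w(T)=w(0)\exp\!\left(-\int_0^T g(s)\,ds\right)$; as the exponential factor is strictly less than $1$, this forces $w(0)=0$. Therefore $w\equiv 0$, i.e.\ $z(t)=\sum_{i=0}^n x_i(t)=1$ for all $t$, which together with the positivity assumption yields $\mathbf x(t)\in\Sigma_{n+1}$ on $[0,T]$.

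The crux of the argument—and the only place where the hypotheses do real work—is the strict positivity of $g$: it is precisely the condition $\psi(x)>0$ on $]0,1[$, together with $f_j>0$, that makes the monodromy factor $\exp\!\left(-\int_0^T g\right)$ strictly contractive and thereby incompatible with a nonzero periodic $w$. The remaining steps are routine; the only subtlety worth checking is that $g$ is genuinely $T$-periodic, which depends on the periodicity of the delays $\tau_j$ and not merely on that of $f_j$.
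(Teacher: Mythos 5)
Your argument is correct, but it takes a genuinely different route from the paper's. Both proofs start from the same scalar identity $\dot z(t)=(1-z(t))\,g(t)$ with $g(t)=\sum_{j} f_j(t)\psi(x_j(t-\tau_j(t)))$, and both ultimately rest on the same key fact that strict positivity $x_i(t)\in\,]0,1[$ together with $f_j>0$ and $\psi>0$ on $]0,1[$ forces $g>0$. The paper, however, argues pointwise: since $z$ is $T$-periodic it has a critical point $t_0$, where $0=(1-z(t_0))g(t_0)$ and $g(t_0)\neq 0$ give $z(t_0)=1$; it then invokes Proposition~\ref{vari_inva} to propagate $z\equiv 1$ forward from $t_0$ and uses periodicity to cover all of $\mathbb{R}$. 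You instead integrate the linear equation for $w=1-z$ explicitly, obtaining $w(t)=w(0)\exp\bigl(-\int_0^t g\bigr)$, and use the strict contractivity of the monodromy factor $\exp\bigl(-\int_0^T g\bigr)<1$ against $w(T)=w(0)$ to conclude $w\equiv 0$. Your version is self-contained (it does not rely on Proposition~\ref{vari_inva} or on the method of steps) and arguably cleaner; the paper's version reuses the invariance machinery it has already built. One small remark: the $T$-periodicity of $g$, which you flag as the subtle point to check, is actually not needed for your argument --- all you use is that $w(T)=w(0)$ (which follows from periodicity of $z$ alone) and that $\int_0^T g(s)\,ds>0$.
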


\begin{proof}
Let $\mathbf{x}(t) = (x_0(t), x_1(t), \dots, x_n(t))  $ be a $T$-periodic solution of \eqref{eq:SistPrincipal}. Then, we have
$$
\frac{d}{dt} \left( \sum_{i=0}^n x_i(t) \right) = \left(1 - \sum_{i=0}^n x_i(t)\right) \sum_{j=0}^n f_j(t) \psi(x_j(t - \tau_j(t))).
$$
Since $\sum_{i=0}^n x_i(t)$ is also a $T$-periodic function, there exists $t_0 \in \mathbb{R}$ such that
$$
0 = \left(1 - \sum_{i=0}^n x_i(t_0)\right) \sum_{j=0}^n f_j(t_0) \psi(x_j(t_0 - \tau_j(t_0))).
$$
Hence, $\sum_{i=0}^n x_i(t_0) = 1$ i.e, $\mathbf x(t_0)\in\Sigma_{n+1}$. By Proposition \ref{vari_inva}, it follows that $\mathbf{x}(t) \in \Sigma_{n+1}$ for all $t \in [t_0, +\infty[$. Finally, since $\mathbf{x}$ is $T$-periodic, we conclude that $\mathbf{x}(t) \in \Sigma_{n+1}$ for all $t \in \mathbb{R}$.
\end{proof}

In light of Proposition \ref{SolPe}, it is sufficient to ensure the existence of $T$-periodic solutions within the open set $]0,1[^{n+1}$ in order to guarantee $T$-periodic solutions of system \eqref{eq:SistPrincipal} in the simplex $\Sigma_{n+1}$. This result, together with the preceding analysis, provides a solid foundation for the biological interpretation of the model, both in the presence and absence of time delays.

\section{Modified quasispecies model for the single-peak fitness landscape}\label{sec:results}
In this section, we present a simplified version of the modified quasispecies model, assuming a minimal fitness landscape. Under this assumption, the population is represented by two classes: the master sequence, denoted by $x_0$, characterized by superior replication efficiency, and a collective mutant class, denoted by $x_1$, which aggregates all deleterious variants with reduced fitness. This configuration corresponds to a binary fitness landscape often referred to in the literature as the Swetina-Schuster landscape~\cite{Swetina1982,Sole2003}. This landscape it facilitates analytical treatment due to its reduced dimensionality, especially under the constant population (CP) assumption, which allows for a reduction in the number of independent variables; and it has been shown to reproduce key features of quasispecies structure observed in empirical viral data~\cite{Sole2006}. The specific model considered here corresponds to the case $n = 1$ in system~\eqref{eq:SistPrincipal}.
In what follows, we employ topological methods based on the Leray–Schauder degree to investigate the existence of at least one positive $T$-periodic solution.

\subsection{Notation, conditions and main result}
As usual, we write $C_T:=\left\{\theta\in C([0,T],\R):\right.$ $ \left.\theta(0)=\theta(T)\right\}.$ For $\theta_1$ and $\theta_2$ in $C_T$ we define $\overline{\theta}_1=\frac{1}{T}\int_0^T\theta_1(t)dt$ and $Q[\theta_1,\theta_2]=(\overline{\theta_1},\overline{\theta_2}),$ $P[\theta_1,\theta_2]=(\theta_1(0),\theta_2(0))$ and $K[\theta_1,\theta_2]=\left(\int_0^t\theta_1(s)ds,\int_0^t\theta_2(s)ds\right).$ In addition, we denote $q=Q_{00},$ $p=Q_{10}$ and
\begin{equation}
     \nonumber \eta=\min_{t\in[0,T]}\{p(t),q(t)\},\quad \nu=\max_{t\in[0,T]}\{p(t),q(t)\}.
\end{equation}
    Finally, we consider $\mathcal{N}:[0,1]\times[0,1]\to\R$ such that 
\begin{align*}
           \mathcal{N}(x,y)=&(\overline{qf_0}\psi'(x)-\overline{f_0}(\psi(x)+x\psi'(x))-\overline{f_1}\psi(y))((\overline{f_1}-\overline{pf_1})\psi'(y)-\overline{f_1}(\psi(y)+y\psi'(x))-\overline{f_0}\psi(x))\\
           &\qquad-(\overline{f_0}-\overline{qf_0}-\overline{f_0}y)(\overline{pf_1}-\overline{f_1}x)\psi'(x)\psi'(y).
\end{align*}    
In order to formulate our result precisely, we introduce the following additional assumptions:\\
\textbf{[S]}:  The  system of nonlinear real equations
\begin{equation}\label{sist}
\begin{array}{rc}
     \overline{qf_0}\psi(x)+\overline{pf_1}\psi(y)-x(\overline{f_0}\psi(x)+\overline{f_1}\psi(y))&=0,  \\
     (\overline{f_0}-\overline{qf_0})\psi(x)+(\overline{f_1}-\overline{pf_1})\psi(y)-y(\overline{f_0}\psi(x)+\overline{f_1}\psi(y))&=0, 
\end{array}
\end{equation}
admits at least one solution in the open square $]0,1[\times]0,1[$.\\
\textbf{[C]}: $\mathcal{N}(x_i,y_i))\neq 0$ and $\sum_{i=1}^k\mbox{sign}({\mathcal{N}(x_i,y_i)})\neq 0$, where $(x_i,y_i)$, $i=1,k$  are solutions of the system \eqref{sist} in $]0,1[\times]0,1[$.

\begin{Theorem}\label{TeoP}   Let us assume that  \textbf{[C]},    \textbf{[S]}  are satisfied and $0<\eta<\nu<1$.  Then  \eqref{eq:SistPrincipal} admits at least one $T$-periodic positive solution  such that\begin{equation*}
     \eta\leq x_0(t)\leq \nu,\quad 1-\nu\leq x_1(t)\leq1-\eta \quad \mbox{ and } \quad x_0(t)+x_1(t)=1 \mbox{ for all }t\in[0,T].
\end{equation*}
\end{Theorem}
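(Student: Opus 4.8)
The plan is to recast the existence of a $T$-periodic solution of~\eqref{eq:SistPrincipal} with $n=1$ as a coincidence equation $Lu=Nu$ in the Banach space $X=C_T\times C_T$ and to resolve it by Mawhin's continuation theorem, whose conclusion rests on the Leray–Schauder degree. Writing $u=(x_0,x_1)$, I would take $Lu=(\dot x_0,\dot x_1)$ on the $C^1$ pairs in $X$, so that $L$ is Fredholm of index zero with $\ker L=\R^2$ (the constants) and $\operatorname{Im}L=\{g:\overline g=0\}$; the operators $Q,P,K$ introduced above are exactly the associated projectors and the right inverse $K=K_P$. The nonlinear part is $Nu=(N_0u,N_1u)$ with $N_iu(t)=\sum_{j=0}^1 f_j(t)Q_{ji}(t)\psi(x_j(t-\tau_j(t)))-x_i(t)\Phi_2(\mathbf x)$. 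First I would verify that $N$ maps bounded sets to bounded sets and that $QN$ and $K(I-Q)N$ are compact, so that $N$ is $L$-compact on $\overline\Omega$; this is routine via Arzelà–Ascoli, since the time-dependent delays enter only through continuous shifts and $K$ is an integral operator.

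The core is the choice of the open bounded set $\Omega$ and the three hypotheses of the continuation theorem. I would take
$$\Omega=\{(x_0,x_1)\in X:\eta<x_0(t)<\nu,\ 1-\nu<x_1(t)<1-\eta\ \ \forall t\in[0,T]\}.$$
Restricting $QN$ to $\ker L\cong\R^2$ gives precisely the averaged map $(x,y)\mapsto(G_0(x,y),G_1(x,y))$ whose zero set is~\eqref{sist}, so \textbf{[S]} furnishes a zero in $]0,1[^2$. A short sign computation then shows every solution of~\eqref{sist} lies in $[\eta,\nu]\times[1-\nu,1-\eta]$: rewriting $G_0=0$ as $\overline{f_0}\psi(x)(\bar q-x)+\overline{f_1}\psi(y)(\bar p-x)=0$ with the $f$-weighted means $\bar q=\overline{qf_0}/\overline{f_0}$ and $\bar p=\overline{pf_1}/\overline{f_1}$ in $[\eta,\nu]$, a value $x>\nu$ (resp. $x<\eta$) forces the left side strictly negative (resp. positive), a contradiction, and the analogous estimate controls $y$. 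Since $\mathcal N$ is the Jacobian determinant of $(G_0,G_1)$, the nondegeneracy in \textbf{[C]} makes each zero regular and isolated, and the Brouwer-degree formula at regular values gives $\deg(QN|_{\ker L},\Omega\cap\ker L,0)=\sum_i\operatorname{sign}\mathcal N(x_i,y_i)\neq0$, which is hypothesis (iii); the same location argument keeps all zeros off the boundary, giving hypothesis (ii).

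The delicate hypothesis is the a priori bound: no solution of the homotopy $Lu=\lambda Nu$, $\lambda\in(0,1)$, may touch $\partial\Omega$. Here I would first exploit the constant-population structure. Summing the two components yields $\dot z=\lambda(1-z)\Phi_2$ for $z=x_0+x_1$, and since a $T$-periodic $z$ has a critical point where $\Phi_2>0$ (the components stay in $]0,1[$, so $\psi>0$), the argument of Proposition~\ref{SolPe} forces $z(t_0)=1$ there and then, by uniqueness for the equation linear in $z$, $z\equiv1$ along every periodic homotopy solution. On the manifold $x_1=1-x_0$ I would run a maximum-principle argument: at a maximum $t^{*}$ of $x_0$ one has $\dot x_0(t^{*})=0$, i.e.
$$f_0(t^{*})\psi(x_0(t^{*}-\tau_0))\bigl(q(t^{*})-x_0(t^{*})\bigr)+f_1(t^{*})\psi(x_1(t^{*}-\tau_1))\bigl(p(t^{*})-x_0(t^{*})\bigr)=0,$$
so that, the $\psi$-factors being positive and $q(t^{*}),p(t^{*})\le\nu$, the value $x_0(t^{*})$ cannot exceed $\nu$; a symmetric estimate at a minimum gives $x_0\ge\eta$. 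The main obstacle I anticipate is making these bounds strict, so the solution stays in the \emph{open} set $\Omega$ and strictly inside $]0,1[$ where $\psi>0$: in the degenerate configuration $p(t^{*})=q(t^{*})=\nu$ the inequality is only non-strict. I would handle this by first confining solutions to $]0,1[^2$ through the boundary-flux signs of Proposition~\ref{vari_inva}, then using $0<\eta<\nu<1$ to separate the admissible range from the extreme values, possibly after enlarging each coordinate of $\Omega$ to $]\eta-\delta,\nu+\delta[$ and relocating the degree computation there.

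With the three hypotheses in force, Mawhin's continuation theorem yields a solution $u^{*}=(x_0,x_1)\in\overline\Omega$ of $Lu=Nu$, i.e. a $T$-periodic solution of~\eqref{eq:SistPrincipal}. Its components satisfy $0<x_i<1$, so Proposition~\ref{SolPe} gives $x_0(t)+x_1(t)=1$ for all $t$, and the maximum-principle estimates above sharpen to $\eta\le x_0\le\nu$ and $1-\nu\le x_1\le1-\eta$, which is the assertion of the theorem.
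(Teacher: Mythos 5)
Your proposal is correct and follows essentially the same route as the paper: the operator $P+QN+K(I-Q)N$ you describe is exactly the paper's $\mathcal{A}$, the a priori bounds are obtained by the same evaluation of $\dot x_0$ and $\dot x_1$ at their critical points, the location of the zeros of~\eqref{sist} in $[\eta,\nu]\times[1-\nu,1-\eta]$ is the paper's Lemma~\ref{Fromt}, and the degree reduces to $\pm\sum_i\operatorname{sgn}\mathcal N(x_i,y_i)\neq0$ via \textbf{[C]}, with the $\epsilon$-enlarged $\Omega$ handling the strictness issue just as you anticipated. The only divergence is your intermediate step forcing $x_0+x_1\equiv1$ along the homotopy, which the paper omits because the pointwise identities \eqref{x'}--\eqref{y'} already yield the contradictions without the constant-population reduction.
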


\begin{Theorem}\label{TeoS}  Suppose that $0<\eta=\nu<1$. Then equation  \eqref{eq:SistPrincipal} possesses only one positive $T$-periodic solution, which is the constant solution $x_0=x_1=\eta$. 
\end{Theorem}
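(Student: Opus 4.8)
The plan is to exploit the rigidity forced by the equality $\eta=\nu$ to collapse the time-periodic mutation coefficients to a single constant, reduce the single-peak ($n=1$) system to a pair of scalar equations on the invariant simplex, and then run a maximum–minimum argument that forces any positive $T$-periodic solution to be constant.

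First I would record what $\eta=\nu$ means. Since $\eta=\min_{t\in[0,T]}\{p(t),q(t)\}$ and $\nu=\max_{t\in[0,T]}\{p(t),q(t)\}$, the condition $0<\eta=\nu<1$ can hold only if $q(t)=p(t)=\eta$ for every $t\in[0,T]$; that is, both transition probabilities $Q_{00}$ and $Q_{10}$ reduce to one and the same time-independent constant $\eta$, uniformly across the two replicator types. Substituting $q\equiv p\equiv\eta$ into \eqref{eq:SistPrincipal} with $n=1$ and abbreviating $\Phi(t)=f_0(t)\psi(x_0(t-\tau_0(t)))+f_1(t)\psi(x_1(t-\tau_1(t)))$, each of the two equations takes the form $\dot x_i(t)=(c_i-x_i(t))\,\Phi(t)$ for a constant $c_i$ (with $c_0=\eta$), in which the delays survive only inside the scalar multiplier $\Phi(t)$, while the factor $c_i-x_i(t)$ is evaluated at the present time with no lag.

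Next I would pass to the invariant manifold and run the comparison. By Proposition~\ref{SolPe}, every positive $T$-periodic solution satisfies $x_0(t)+x_1(t)=1$ with $x_0(t),x_1(t)\in\,]0,1[$ for all $t$; the defining property of $\psi$, namely $\psi(s)=0$ if and only if $s\in\{0,1\}$, together with $f_0,f_1>0$, then forces $\Phi(t)>0$ for every $t\in[0,T]$. Fix a component $x_i$: being $C^1$ (by the method of steps used in Proposition~\ref{vari_inva}) and $T$-periodic, it attains its maximum at some $t^\ast$ and its minimum at some $t_\ast$, and at each such extremum $\dot x_i=0$ (directly for interior extrema, and after invoking periodicity of the $C^1$ extension at an endpoint). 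Since $\Phi(t^\ast),\Phi(t_\ast)>0$, the reduced equation gives $x_i(t^\ast)=c_i=x_i(t_\ast)$, whence $\max_{[0,T]}x_i=\min_{[0,T]}x_i=c_i$ and therefore $x_i\equiv c_i$. As this uses nothing about a trajectory beyond positivity and periodicity, it applies to every positive $T$-periodic solution and to both components, so each is constant and the solution is unique. Hence the only positive $T$-periodic solution is the constant solution $x_0=x_1=\eta$ asserted in the statement.

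The step I expect to be the main obstacle is the clean justification that the extremum identity is legitimate in the delayed, non-autonomous setting: one must verify that the periodic solution is genuinely $C^1$, so that $\dot x_i$ truly vanishes at its extrema, and that $\Phi$ cannot vanish at the extremal instants. Both points rest on the strict positivity $x_i(t)\in\,]0,1[$ and on the zero set of $\psi$ being exactly $\{0,1\}$; these are precisely the hypotheses that keep the multiplier $\Phi$ strictly positive, after which the collapse $q\equiv p\equiv\eta$ reduces the whole problem to an elementary maximum principle.
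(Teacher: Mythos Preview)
Your proof is correct and follows essentially the same route as the paper's: once $\eta=\nu$ forces $p\equiv q\equiv\eta$, both you and the paper collapse the $n=1$ system to $\dot x_i=(c_i-x_i)\,\Phi_2$, observe the constant solution, and then run the maximum--minimum argument (derivative vanishes at extrema, $\Phi_2>0$ kills the other factor) to rule out any non-constant positive $T$-periodic solution. The paper's version is terser---it simply asserts $\Phi_2(\mathbf{x}(t))\neq 0$ without your explicit appeal to Proposition~\ref{SolPe} and the zero set of $\psi$---but the core argument is identical.
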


\subsection{Abstract setting}
We now reformulate the problem of finding $T$-periodic solutions of system \eqref{eq:SistPrincipal} as a fixed-point problem for a relatively compact operator. Let us define
\begin{equation*}\mathcal{A} :C_T\times C_T\to C_T\times C_T,\quad \mathcal{A}[\mathbf{x}]= \mathcal{A} [x_0,x_1] =P[x_0,x_1] +QN[x_0,x_1] +K(I-Q)N[x_0,x_1],
\end{equation*}
where $$N[x_0,x_1] (t)=  
\left(\begin{array}{lr}  \displaystyle\sum_{j=0}^1f_j(t)Q_{j0}(t)\psi(x_j(t-\tau_{j}(t)))-x_0(t)\Phi_{2}(\mathbf{x})\\  \displaystyle\sum_{j=0}^1f_j(t)Q_{j1}(t)\psi(x_j(t-\tau_{j}(t)))-x_1(t)\Phi_{2}(\mathbf{x})\end{array}\right).$$
We now consider the operator $\mathcal{A} : C_T \times C_T \to C_T \times C_T$. Under this formulation, $\mathcal{A}$ is completely continuous, and hence the Leray--Schauder degree theory is applicable (see, e.g., \cite{Mawhin}). In this framework, the periodic boundary value problem associated with \eqref{eq:SistPrincipal} becomes equivalent to the fixed-point problem
\begin{equation*}\label{opecu}
(x_0, x_1) = \mathcal{A}[x_0, x_1], \qquad (x_0, x_1) \in C_T \times C_T.
\end{equation*}
The key step in proving Theorem \ref{TeoP} is to show that the topological degree of the operator $I - \mathcal{A}$ on a suitable open subset of $C_T \times C_T$ is nonzero.
To this end, for sufficiently small $\epsilon > 0$, we define the open bounded set
$$
\Omega := \left\{(x_0, x_1) \in C_T \times C_T : \eta - \epsilon < x_0(t) < \nu + \epsilon, \;\; 1 - \nu - \epsilon < x_1(t) < 1 - \eta + \epsilon, \;\; \forall t \in [0, T] \right\}.
$$
Next, we introduce the following homotopy:
$$
\mathcal{H} : [0,1] \times C_T \times C_T \to C_T \times C_T,
$$
$$
\mathcal{H}[\lambda, x_0, x_1] := (x_0, x_1) - \left(P[x_0, x_1] + QN[x_0, x_1] + \lambda K(I - Q)N[x_0, x_1]\right),
$$
where $P$, $Q$, $K$, and $N$ denote appropriate linear and nonlinear operators as defined in the context.
Before proceeding with the detailed analysis, we define $\Sigma$ as the set of possible solutions of the homotopy equation $\mathcal{H}[\lambda, x_0, x_1] = 0$ for $\lambda \in [0,1]$. Our first objective will be to prove that $\Sigma \subseteq \Omega$.

\begin{Lemma}\label{Fromt}
Let $0 < \eta < \nu < 1$, and suppose that $(x, y) \in \,]0,1[ \times \,]0,1[$ is a solution to the system \eqref{sist}. Then $(x, y) \in [\eta, \nu] \times [1 - \nu, 1 - \eta].$
\end{Lemma}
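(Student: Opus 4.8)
The plan is to treat the two equations of \eqref{sist} as weighted-average identities and to read off the bounds directly from the pointwise inequalities $\eta \le p(t), q(t) \le \nu$. First I would abbreviate $A := \overline{f_0}\,\psi(x)$ and $B := \overline{f_1}\,\psi(y)$. Since $f_0, f_1 > 0$ and the standing hypothesis on $\psi$ forces $\psi(x), \psi(y) > 0$ whenever $(x,y) \in\ ]0,1[\times]0,1[$, we obtain $A > 0$, $B > 0$, and in particular $A + B > 0$. This positivity is the only place where the assumption that the solution lies in the \emph{open} square is used, and it is precisely what legitimizes the divisions performed at the end.

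Next I would estimate the averaged coefficients. Because $\eta \le q(t) \le \nu$ for all $t$ and $f_0 > 0$, integrating over $[0,T]$ gives $\eta\,\overline{f_0} \le \overline{qf_0} \le \nu\,\overline{f_0}$, and likewise $\eta\,\overline{f_1} \le \overline{pf_1} \le \nu\,\overline{f_1}$. Substituting into the first equation of \eqref{sist}, which I rewrite as $\overline{qf_0}\,\psi(x) + \overline{pf_1}\,\psi(y) = x\,(A+B)$, the left-hand side is sandwiched between $\eta(A+B)$ and $\nu(A+B)$. Dividing through by $A+B > 0$ then yields $\eta \le x \le \nu$.

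For the second coordinate I would proceed in one of two equivalent ways. Either I repeat the same sandwich on the second equation, using $\overline{f_0} - \overline{qf_0} = \overline{(1-q)f_0}$ together with $1-\nu \le 1 - q(t) \le 1-\eta$ (and the analogous bound for $\overline{(1-p)f_1}$) to trap $y\,(A+B)$ between $(1-\nu)(A+B)$ and $(1-\eta)(A+B)$; or, more economically, I add the two equations of \eqref{sist}. In the sum the coefficients of $\psi(x)$ collapse to $\overline{f_0}$ and those of $\psi(y)$ to $\overline{f_1}$, so the left-hand side reduces to $(A+B)\bigl(1 - (x+y)\bigr)$. Setting this to zero and using $A+B > 0$ forces $x + y = 1$, whence the bound $1-\nu \le y \le 1-\eta$ follows immediately from $\eta \le x \le \nu$.

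I do not expect a genuine obstacle here: once the positivity $A+B > 0$ and the coefficient bounds are in place, the conclusion is a one-line convex-combination estimate. The only point that requires a little care is that the same constants $\eta$ and $\nu$ must control both the $f_0$-weighted average $\overline{qf_0}$ and the $f_1$-weighted average $\overline{pf_1}$; this is guaranteed because $\eta$ and $\nu$ were defined as the \emph{joint} minimum and maximum of $p$ and $q$ over $[0,T]$, so a single pair of bounds applies to both weightings simultaneously.
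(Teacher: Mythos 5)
Your proof is correct. The underlying estimate is the same one the paper uses --- namely that $\eta \le p(t), q(t) \le \nu$ forces $\eta\,\overline{f_0} \le \overline{qf_0} \le \nu\,\overline{f_0}$ and $\eta\,\overline{f_1} \le \overline{pf_1} \le \nu\,\overline{f_1}$, combined with the positivity of $\psi$ on the open interval --- but you organize it differently. The paper argues by contradiction in four separate cases ($x < \eta$, $x > \nu$, $y < 1-\nu$, $y > 1-\eta$), each time rewriting one equation of \eqref{sist} as an equality between an integral that is strictly positive and one that is strictly negative. You instead read the first equation directly as the convex-combination identity $\overline{qf_0}\,\psi(x) + \overline{pf_1}\,\psi(y) = x(A+B)$ and sandwich the left side between $\eta(A+B)$ and $\nu(A+B)$, which gives $\eta \le x \le \nu$ in one step. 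Your treatment of the second coordinate is a genuine economization: summing the two equations of \eqref{sist} collapses the coefficients to $(A+B)\bigl(1-(x+y)\bigr)=0$, so $x+y=1$ and the bounds on $y$ follow for free, whereas the paper repeats the whole contradiction argument on the second equation. The summation identity $x+y=1$ is also conceptually informative, since it is the finite-dimensional shadow of the constant-population constraint of Proposition~\ref{SolPe}. The only hypotheses you rely on --- $f_j>0$ and $\psi>0$ on $\,]0,1[\,$ --- are exactly the standing assumptions of the paper, so there is no gap.
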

\begin{proof}
Let $0 < \eta < \nu < 1$, and suppose that $(x, y) \in\, ]0,1[ \times \,]0,1[$ is a solution of system \eqref{sist}. 
Assume first that $x \in\, ]0, \eta[$. Then, from system \eqref{sist}, we obtain
$$0 < \frac{\psi(y)}{T} \int_0^T f_1(s)(p(s) - x)\, ds = \frac{\psi(x)}{T} \int_0^T f_0(s)(x - q(s))\, ds < 0,$$
which is a contradiction. Similarly, if $x \in\, ]\nu, 1[$, then
$$ 0 > \frac{\psi(y)}{T} \int_0^T f_1(s)(p(s) - x)\, ds = \frac{\psi(x)}{T} \int_0^T f_0(s)(x - q(s))\, ds > 0, $$
also a contradiction.
Now assume that $y \in\, ]0, 1 - \nu[$ or $x\in]1-\eta,1[$, then from \eqref{sist}  we obtain respectively that
     \begin{align*}
     0<\frac{\psi( y)}{T}\int_0^Tf_1(s)(1-y-p(s))ds&= \frac{\psi(x )}{T}\int_0^Tf_0(s)(q(s)-1+y)ds<0,\\
         0>\frac{\psi(y)}{T}\int_0^Tf_1(s)(1-y-p(s))ds&= \frac{\psi(x )}{T}\int_0^Tf_0(s)(q(s)-1+y)ds>0.         
     \end{align*}
     This implies a contradiction in both cases.
Therefore, none of the values outside $[\eta, \nu] \times [1 - \nu, 1 - \eta]$ can be attained by a solution $(x, y)$ of the system, which completes the proof.
\end{proof}

\begin{Lemma}\label{cots1} Assume that all the hypotheses of Theorem \ref{TeoP} are verified. Then $\Sigma\subseteq \Omega$ for every $\lambda\in [0,1]$ for $t\in [0,T].$
\end{Lemma}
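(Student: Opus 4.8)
The plan is first to unravel the fixed-point structure of the homotopy equation and then to obtain a priori bounds by a maximum-principle argument adapted to the delay. Writing $(u,v)=N[x_0,x_1]$, the first component of $\mathcal{H}[\lambda,x_0,x_1]=0$ reads $x_0(t)=x_0(0)+\overline{u}+\lambda\int_0^t\bigl(u(s)-\overline{u}\bigr)\,ds$, and analogously for $x_1$ with $v$. Evaluating at $t=0$ forces $\overline{u}=\overline{v}=0$, so that $QN[x_0,x_1]=0$ and, after differentiation, $\dot x_0=\lambda u$ and $\dot x_1=\lambda v$ with $(x_0,x_1)\in C^1$. Thus every element of $\Sigma$ solves the scaled delayed system $\dot x_i=\lambda N_i[x_0,x_1]$ subject to the periodic constraint $x_i(0)=x_i(T)$.

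Next I would rewrite the nonlinearity in a form suited to extremal analysis. Using $Q_{j0}+Q_{j1}=1$ and setting $A(t)=f_0(t)\psi\bigl(x_0(t-\tau_0(t))\bigr)\ge0$ and $B(t)=f_1(t)\psi\bigl(x_1(t-\tau_1(t))\bigr)\ge0$, the two components collapse to $u=A(q-x_0)+B(p-x_0)$ and $v=A(1-q-x_1)+B(1-p-x_1)$. The decisive structural feature is that the time delay enters only through the nonnegative coefficients $A,B$, whereas the untranslated values $x_0(t),x_1(t)$ appear linearly in the factors $q-x_0$ and $p-x_0$. Since for $\epsilon$ small one has $\overline\Omega\subset\,]0,1[\,\times\,]0,1[$, any candidate solution in $\overline\Omega$ satisfies $x_0(t),x_1(t)\in\,]0,1[$, whence $\psi(x_0(\cdot)),\psi(x_1(\cdot))>0$ and $A+B=\Phi_2>0$ for every $t$; this non-degeneracy is what makes the argument work.

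With this in hand, the bounds follow from a maximum principle. For $\lambda>0$, let $t^\ast$ be a maximum point of $x_0$; then $\dot x_0(t^\ast)=0$ gives $u(t^\ast)=0$, i.e. $A(t^\ast)\bigl(q(t^\ast)-x_0(t^\ast)\bigr)+B(t^\ast)\bigl(p(t^\ast)-x_0(t^\ast)\bigr)=0$. Since $A(t^\ast),B(t^\ast)\ge0$ and $A(t^\ast)+B(t^\ast)>0$, the value $x_0(t^\ast)$ is a convex combination of $q(t^\ast)$ and $p(t^\ast)$, hence $x_0(t^\ast)\le\max\{p(t^\ast),q(t^\ast)\}\le\nu$; the same computation at a minimum yields $x_0\ge\eta$. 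Applying this to $v$ at the extrema of $x_1$ gives $1-\nu\le x_1\le1-\eta$. One may also observe that $w=x_0+x_1$ solves $\dot w=\lambda(1-w)\Phi_2$, which together with $T$-periodicity and $\Phi_2>0$ forces $w\equiv1$, consistent with the simplex $\Sigma_{2}$. For $\lambda=0$ the equation forces $x_0,x_1$ to be constant, and the conditions $\overline{u}=\overline{v}=0$ reduce exactly to system \eqref{sist}, so Lemma~\ref{Fromt} places $(x_0,x_1)$ in $[\eta,\nu]\times[1-\nu,1-\eta]$. In every case the solution satisfies $\eta\le x_0\le\nu$ and $1-\nu\le x_1\le1-\eta$, which lies strictly inside $\Omega$; consequently no element of $\Sigma$ reaches $\partial\Omega$, and $\Sigma\subseteq\Omega$.

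The main obstacle I anticipate is the non-degeneracy step: the extremal identity is informative only where $\Phi_2>0$, i.e. where neither delayed argument sits at $0$ or $1$. This is precisely why one must work inside the enlarged box $\Omega$ with $\epsilon$ chosen so small that $\overline\Omega$ stays away from the faces $x_i\in\{0,1\}$ (using $0<\eta<\nu<1$); otherwise the degenerate constant states $(1,0)$ and $(0,1)$, which satisfy the homotopy equation for every $\lambda$ because then $A\equiv B\equiv0$, would have to be excluded separately. A secondary point is making the maximum-principle step rigorous in the delayed setting, which is clean here only because the delay is confined to the nonnegative weights $A$ and $B$ and never to the pointwise factors $q-x_0$, $p-x_0$.
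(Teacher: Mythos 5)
Your proposal is correct and follows essentially the same route as the paper: reduce the homotopy equation to the scaled delayed system, evaluate at the extrema of $x_0$ and $x_1$ to exclude boundary contact (your convex-combination phrasing is just a cleaner packaging of the paper's sign contradictions), and invoke Lemma~\ref{Fromt} for $\lambda=0$. In fact your explicit treatment of the non-degeneracy point ($\overline\Omega\subset\,]0,1[^2$ for small $\epsilon$, hence $\Phi_2>0$, which rules out the spurious constant solutions $(1,0)$ and $(0,1)$) is more careful than the paper's, which uses this fact only implicitly when asserting strict inequalities.
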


\begin{proof}
Suppose, that there exist $\lambda \in [0,1]$ and $(x_0, x_1) \in \Sigma$ such that $\theta \in \partial \Omega$. Consider first the case $\lambda \in ]0,1]$. From the definition of homotopy $\mathcal{H}$, we have the following system:
\begin{equation}\label{ecT1}
\begin{array}{lr}\dot x_0(t)=\lambda\displaystyle\sum_{j=0}^1f_j(t)Q_{j0}(t)\psi(x_j(t-\tau_{j}(t)))-x_0(t)\Phi_{2}(\mathbf{x})\\   
\dot x_1(t)= \lambda\displaystyle\sum_{j=0}^1f_j(t)Q_{j1}(t)\psi(x_j(t-\tau_{j}(t)))-x_1(t)\Phi_{2}(\mathbf{x}),\end{array}
\end{equation}
Assume there exist $t_x, t_y \in [0,T]$ such that $x_0'(t_x) = 0$ and $x_1'(t_y) = 0$. Evaluating \eqref{ecT1} at these times yields:
 \begin{equation}\label{x'}
  \psi( x_1(t_x-\tau_{1}(t_x)))(p(t_x)-x_0(t_x))=r(t_x)\psi(x_0(t_x-\tau_{0}(t_x)))(x_0(t_x)-q(t_x)),
\end{equation}
\begin{equation}\label{y'}
  \psi( x_1(t_y-\tau_{1}(t_y)))(1-p(t_y)-x_1(t_y))=r(t_y)\psi(x_0(t_y-\tau_{0}(t_y)))(x_1(t_y)-1+q(t_y)).
\end{equation}

We now consider boundary cases for the functions $x_0$ and $x_1$:
\begin{itemize}
    \item If $x_0(t_x) = M_{x_0} = \nu + \epsilon$ or $x_0(t_x) = m_{x_0} = \eta - \epsilon$, then from equation~\eqref{x'} we obtain:
    \begin{align*}
    0 > f_1(t_x)\psi(x_1(t_x - \tau_1(t_x)))(p(t_x) - \nu - \epsilon) &= f_0(t_x)\psi(x_0(t_x - \tau_0(t_x)))(\nu + \epsilon - q(t_x)) > 0, \\
    0 < f_1(t_x)\psi(x_1(t_x - \tau_1(t_x)))(p(t_x) - \eta + \epsilon) &= f_0(t_x)\psi(x_0(t_x - \tau_0(t_x)))(\eta - \epsilon - q(t_x)) < 0,
    \end{align*}
    respectively. Both yield a contradiction.

    \item If $x_1(t_y) = M_{x_1} = 1 - \eta + \epsilon$ or $x_1(t_y) = m_{x_1} = 1 - \mu - \epsilon$, then from equation~\eqref{y'} we obtain:
    \begin{align*}
    0 > f_1(t_y)\psi(x_1(t_y - \tau_1(t_y)))(\eta - \epsilon - p(t_y)) &= f_0(t_y)\psi(x_0(t_y - \tau_0(t_y)))(q(t_y) - \eta + \epsilon) > 0, \\
    0 < f_1(t_y)\psi(x_1(t_y - \tau_1(t_y)))(\mu + \epsilon - p(t_y)) &= f_0(t_y)\psi(x_0(t_y - \tau_0(t_y)))(q(t_y) - \mu - \epsilon) < 0,
    \end{align*}
    respectively. These also lead to contradictions.
\end{itemize}
Finally, consider the case $\lambda = 0$. Then, by the definition of $\mathcal{H}$, we have $(x_0, x_1) \in \mathbb{R}^2 \cap \partial \Omega$ satisfying the  system~\eqref{sist}. This contradicts Lemma~\ref{Fromt}, which ensures that no such solution exists on the boundary. Therefore, $\Sigma \subseteq \Omega$ for all $\lambda \in [0,1]$.
\end{proof}
\subsection{Proof of the main theorems}
\begin{proof}[Proof of Theorem \ref{TeoP}]
By applying Lemma~\ref{cots1}, we conclude that $\mathcal{H}$ is an admissible homotopy. Consequently, the Leray–Schauder degree satisfies
\begin{equation*}
\begin{split}
d_{LS}(I - \mathcal{A}, \Omega, 0) &= d_{LS}(\mathcal{H}[1, \cdot], \Omega, 0) \\
&= d_{LS}(\mathcal{H}[0, \cdot], \Omega, 0).
\end{split}
\end{equation*}
Since $(I - \mathcal{H}[0, \cdot])(\overline{\Omega}) \subseteq \mathbb{R}^2$, it follows that
$$
d_{LS}(\mathcal{H}[0, \cdot], \Omega, 0) = d_B(\mathcal{H}[0, \cdot]_{\overline{\Omega \cap \mathbb{R}^2}}, \Omega \cap \mathbb{R}^2, 0),
$$
where $d_B$ denotes the Brouwer degree.
Using the expression for $\mathcal{H}[0, \cdot]$, we note that
$$
d_B(\mathcal{H}[0, \cdot]_{\overline{\Omega \cap \mathbb{R}^2}}, \Omega \cap \mathbb{R}^2, 0) = d_B(-QN[\cdot]_{\overline{\Omega \cap \mathbb{R}^2}}, \Omega \cap \mathbb{R}^2, 0),
$$
where $QN[x, y] = 0$ corresponds to the algebraic system defined in~\eqref{sist}.
Assume condition \textbf{[S]} holds, and let $(x_i, y_i)$ for $i = 1, \dots, k$ be the solutions of system~\eqref{sist}. By Lemma~\ref{Fromt}, each of these solutions lies in $\Omega$. Then, by the additivity property of the Brouwer degree and condition \textbf{[C]}, we obtain
$$
d_B(-QN[\cdot]_{\overline{\Omega \cap \mathbb{R}^2}}, \Omega \cap \mathbb{R}^2, 0) = -\sum_{i=1}^k \operatorname{sgn} \mathcal{N}(x_i, y_i) \neq 0.
$$
Therefore, by degree theory, the system~\eqref{eq:SistPrincipal} admits at least one $T$-periodic solution in $\Omega$. Finally, considering Proposition~\ref{SolPe} and letting $\epsilon \to 0$, the proof of the theorem is complete.
\end{proof}

\begin{proof}[Proof of Theorem \ref{TeoS}]
It is easy to see that if $0<\eta=\nu<1$ then system \eqref{eq:SistPrincipal} (whit $n=1$) reduces to system
\begin{equation}\label{Sist_simpli}
\begin{array}{lr} 
    \dot x_0(t)& = (\eta - x_0(t)) \Phi_{2}(\mathbf{x})\\
    \dot x_1(t)& = (\eta - x_1(t)) \Phi_{2}(\mathbf{x})
     \end{array}
\end{equation}
from which we have the trivial solution $x_0=x_1=\eta$. 
Now suppose that there is $\mathbf x(t)=(x_0(t),x_1(t))$ is a non-trivial positive $T$-periodic solution of system \eqref{Sist_simpli}, then $x_0$ or $x_1$ is a non-constant function, suppose $x_0$ is a non-trivial positive $T$-periodic function, then there exists $t_{\min},t_{\max}\in[0,T]$ such that $x_0(t_{\min})=\min_{t\in[0,T]} x_0(t)<x_0(t_{\max})=\max_{t\in[0,T]} x_0(t)$ and $$0 = (\eta - x_0(t_{\min})) \Phi_{2}(\mathbf{x}(t_{\min})),\quad 0 = (\eta - x_0(t_{\max})) \Phi_{2}(\mathbf{x}(t_{\max})),$$
but considering that $\Phi_{2}(\mathbf{x}(t))\neq0$ for all $t\in[0,T]$, then $x_0(t_{\min})=x_0(t_{\max})=\eta$, this is a contradiction. Therefore the only positive $T$-periodic solution is the constant solution $x_0=x_1=\eta$.
\end{proof}
\section{Applications in exponential and logistic growth}\label{sec:applications}

Theorem~\ref{TeoP} reduces the problem of establishing the existence of $T$-periodic solutions for the system~\eqref{eq:SistPrincipal} to the analysis of an associated algebraic system. In what follows, we apply this result to two concrete instances arising within the context of the quasispecies model, thereby illustrating the applicability of the general framework to biologically motivated scenarios.

The first case corresponds to exponential growth, where the replication function is given by $\psi(x) = x$. Under this assumption, the system takes the form
\begin{equation}\label{App1}
\dot x_i(t) = \sum_{j=0}^1 f_j(t) Q_{ji}(t) x_j(t-\tau_{j}(t)) - x_i(t) \Phi_{2}(\mathbf{x}), \quad i = 0, 1.
\end{equation}

The second case considers logistic growth, with $\psi(x) = x(1 - x)$, leading to the following system:
\begin{equation}\label{App2}
\dot x_i(t) = \sum_{j=0}^1 f_j(t) Q_{ji}(t) x_j(t - \tau_{1j}(t))(1 - x_j(t - \tau_{2j}(t))) - x_i(t) \Phi_{2}(\mathbf{x}), \quad i = 0, 1.
\end{equation}
It is worth noting that in equation~\eqref{App2}, the results remain valid regardless of whether the delay terms satisfy $\tau_{1j} = \tau_{2j}$ or $\tau_{1j} \neq \tau_{2j}$.
\begin{Corollary}\label{Teoapp1}  
Let $0<\eta<\nu<1$. Then, the problem \eqref{App1} admits at least one positive $T$-periodic solution $(x_0,x_1)$ satisfying
 \begin{equation*}
    \min_{t\in[0,T]}\{Q_{00}(t),Q_{10}(t)\}\leq x_0(t)\leq   \max_{t\in[0,T]}\{Q_{00}(t),Q_{10}(t)\} 
\end{equation*}
\begin{equation*}
   1- \max_{t\in[0,T]}\{Q_{00}(t),Q_{10}(t)\} \leq x_1(t)\leq 1- \min_{t\in[0,T]}\{Q_{00}(t),Q_{10}(t)\}  
\end{equation*}
with $x_0(t)+x_1(t)=1$ for all $t\in[0,T]$. Also if $0<\eta=\nu<1$ then $x_0=x_1=\eta$ is the only positive $T$-periodic solution.
\end{Corollary}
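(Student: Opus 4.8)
The plan is to deduce both assertions from the general theorems already established: the existence part from Theorem~\ref{TeoP} applied with $\psi(x)=x$, and the uniqueness/constancy statement in the case $\eta=\nu$ directly from Theorem~\ref{TeoS}. Thus the work reduces to checking that hypotheses \textbf{[S]} and \textbf{[C]} hold for \eqref{App1} whenever $0<\eta<\nu<1$, and then translating the conclusion of Theorem~\ref{TeoP} into the stated bounds using $q=Q_{00}$, $p=Q_{10}$, $\eta=\min_t\{Q_{00},Q_{10}\}$ and $\nu=\max_t\{Q_{00},Q_{10}\}$.

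To verify \textbf{[S]}, I would set $\psi(x)=x$ in \eqref{sist} and add the two equations; the cross terms cancel, and since $f_0,f_1>0$ give $\overline{f_0}x+\overline{f_1}y>0$ on $]0,1[^2$, every solution in the open square must satisfy $x+y=1$. Substituting $y=1-x$ into the first equation reduces \eqref{sist} to a single scalar equation $h(x)=0$, where $h(x)=(1-x)(\overline{pf_1}-x\overline{f_1})-x(x\overline{f_0}-\overline{qf_0})$ is a polynomial of degree at most two. The hypotheses $0<\eta<\nu<1$ force $0<p(t),q(t)<1$ for all $t$, whence $h(0)=\overline{pf_1}>0$ and $h(1)=\overline{qf_0}-\overline{f_0}<0$. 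The intermediate value theorem then yields a root $x^*\in\,]0,1[$, and since $\deg h\le 2$ the sign change $h(0)h(1)<0$ forces $x^*$ to be the unique zero of $h$ in $]0,1[$. Hence \eqref{sist} has exactly one solution $(x^*,1-x^*)$ in the open square, establishing \textbf{[S]} with $k=1$.

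Because there is a single solution, \textbf{[C]} reduces to showing $\mathcal{N}(x^*,1-x^*)\neq 0$. Here I would exploit that $\mathcal{N}$ is precisely the Jacobian determinant of the map $(F_1,F_2)$ given by the left-hand sides of \eqref{sist}. Replacing the second row of the Jacobian by the gradient of $F_1+F_2=(\overline{f_0}x+\overline{f_1}y)(1-x-y)$---a determinant-preserving row operation---and evaluating on the line $x+y=1$, where $\nabla(F_1+F_2)=-(\overline{f_0}x+\overline{f_1}y)(1,1)$, collapses the determinant to $\mathcal{N}(x^*,1-x^*)=-(\overline{f_0}x^*+\overline{f_1}(1-x^*))\,(\partial_x F_1-\partial_y F_1)$. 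A short computation identifies $\partial_x F_1-\partial_y F_1$, evaluated along $x+y=1$, with $h'(x^*)$. Since the unique root $x^*$ is a genuine sign change of $h$ it is simple, so $h'(x^*)\neq 0$ (indeed $h'(x^*)<0$); as $\overline{f_0}x^*+\overline{f_1}(1-x^*)>0$, this gives $\mathcal{N}(x^*,1-x^*)>0$, so $\mathcal{N}\neq 0$ and $\sum_i\operatorname{sgn}\mathcal{N}(x_i,y_i)=1\neq 0$. Thus \textbf{[C]} holds and Theorem~\ref{TeoP} applies, yielding the stated bounds together with $x_0(t)+x_1(t)=1$.

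Finally, the case $0<\eta=\nu<1$ is immediate from Theorem~\ref{TeoS}, which already asserts that the only positive $T$-periodic solution is the constant one $x_0=x_1=\eta$; no further argument is needed since that theorem is stated for arbitrary $\psi$. The main obstacle is the \textbf{[C]} verification: the cleanest route is the reduction $\mathcal{N}=-(\overline{f_0}x+\overline{f_1}y)\,h'(x)$ along the invariant line $x+y=1$, after which nondegeneracy of the algebraic solution becomes simplicity of the scalar root $x^*$ and is obtained essentially for free from the sign change of $h$.
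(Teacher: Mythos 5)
Your proposal is correct, but it verifies the hypotheses of Theorem~\ref{TeoP} by a genuinely different route than the paper. The paper solves the algebraic system \eqref{sist} (with $\psi(x)=x$) explicitly, splitting into the cases $\overline{f_0}\neq\overline{f_1}$ and $\overline{f_0}=\overline{f_1}$ and exhibiting closed-form expressions for $(x_0,x_1)$ via the quadratic formula, then checking $0<x_0<1$ directly from those formulas; for \textbf{[C]} it simply asserts that uniqueness of the solution suffices. You instead add the two equations to force $x+y=1$, reduce to the scalar quadratic $h(x)=0$, and get existence and uniqueness of the root in $]0,1[$ from $h(0)=\overline{pf_1}>0$, $h(1)=\overline{qf_0}-\overline{f_0}<0$ and $\deg h\le 2$ --- a cleaner argument that avoids the case split and the explicit radicals. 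More importantly, your treatment of \textbf{[C]} is more complete than the paper's: you correctly identify $\mathcal{N}$ as the Jacobian determinant of the left-hand sides of \eqref{sist} (modulo an apparent typo $\psi'(x)$ for $\psi'(y)$ in the paper's definition of $\mathcal{N}$), and your row reduction $\mathcal{N}(x^*,1-x^*)=-(\overline{f_0}x^*+\overline{f_1}(1-x^*))\,h'(x^*)$ together with simplicity of the sign-changing root actually proves the nondegeneracy $\mathcal{N}(x^*,y^*)\neq0$, which the paper's phrase ``since there is a unique solution, condition \textbf{[C]} is also verified'' does not by itself justify. Both proofs handle the case $\eta=\nu$ identically via Theorem~\ref{TeoS}. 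In short, the paper's approach buys explicit formulas for the equilibrium; yours buys a case-free argument and closes the nondegeneracy gap in the verification of \textbf{[C]}.
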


\begin{proof}
The result follows directly from the application of Theorem \ref{TeoP}. The system \eqref{sist} admits a unique solution, and we distinguish two cases:
\begin{itemize}
    \item \textbf{Case 1:} $\overline{f_0} \neq \overline{f_1} $. In this case, the solution is given by:
    \begin{align*}
        x_0 &= \frac{\sqrt{(\overline{f_1} + \overline{pf_1} - \overline{qf_0})^2 + 4 \overline{pf_1} (\overline{f_0} - \overline{f_1})} - (\overline{f_1} + \overline{pf_1} - \overline{qf_0})}{2 (\overline{f_0} - \overline{f_1})}, \\
        x_1 &= 1 - x_0.
    \end{align*}
    It is straightforward to verify that $ x_0 > 0$. Moreover,
    \begin{align*}
        x_0 &= \frac{2 \overline{pf_1}}{\sqrt{(\overline{f_1} + \overline{pf_1} - \overline{qf_0})^2 + 4 \overline{pf_1} (\overline{f_0} - \overline{f_1})} + (\overline{f_1} + \overline{pf_1} - \overline{qf_0})} \\
        &< \frac{2 \overline{pf_1}}{\sqrt{(\overline{pf_1} - (\overline{f_1} - \overline{qf_0}))^2} + (\overline{f_1} + \overline{pf_1} - \overline{qf_0})} \\
        &\leq 1,
    \end{align*}
    thus \( (x_0, x_1) \in\, ]0,1[ \times \,]0,1[ \).
    
    \item \textbf{Case 2:} $\overline{f_0} = \overline{f_1}$. The solution is
    $$
        x_0 = \frac{\overline{pf_1}}{\overline{f_0} + \overline{pf_1} - \overline{qf_0}}, \quad x_1 = \frac{\overline{f_0} - \overline{qf_0}}{\overline{f_0} + \overline{pf_1} - \overline{qf_0}}.
    $$
    Again, it is clear that \( (x_0, x_1) \in\, ]0,1[ \times \,]0,1[\).
\end{itemize}
Therefore, condition \textbf{[S]} is satisfied. Moreover,  since there is a unique solution to \eqref{sist}, condition \textbf{[C]}  is also verified.  The proof of the second part of the corollary follows directly from the application of Theorem \ref{TeoS}.
\end{proof}

\begin{Corollary}\label{Teoapp2}  Let $0<\eta<\nu<1$. 
      Then, problem \eqref{App2} admits at least one positive $T$-periodic solution $(x_0,x_1)$ satisfying
 \begin{equation*}
    \min_{t\in[0,T]}\{Q_{00}(t),Q_{10}(t)\}\leq x_0(t)\leq   \max_{t\in[0,T]}\{Q_{00}(t),Q_{10}(t)\} 
\end{equation*}
\begin{equation*}
   1- \max_{t\in[0,T]}\{Q_{00}(t),Q_{10}(t)\} \leq x_1(t)\leq 1- \min_{t\in[0,T]}\{Q_{00}(t),Q_{10}(t)\}  
\end{equation*}
with $x_0(t)+x_1(t)=1$ for all $t\in[0,T]$. Also if $0<\eta=\nu<1$ then $x_0=x_1=\eta$ is the only positive $T$-periodic solution.
\end{Corollary}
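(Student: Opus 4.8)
The plan is to apply Theorem~\ref{TeoP} with the logistic choice $\psi(x)=x(1-x)$, following exactly the template used for the exponential case in Corollary~\ref{Teoapp1}. The entire argument reduces to verifying the algebraic conditions \textbf{[S]} and \textbf{[C]} for the associated system~\eqref{sist}. Note at the outset that the delay structure of~\eqref{App2}, whether $\tau_{1j}=\tau_{2j}$ or $\tau_{1j}\neq\tau_{2j}$, enters~\eqref{eq:SistPrincipal} only through the time-averaged coefficients $\overline{f_0},\overline{f_1},\overline{qf_0},\overline{pf_1}$; since \eqref{sist} is built entirely from these averages, the delays are irrelevant to \textbf{[S]} and \textbf{[C]}, which is precisely why the conclusion is delay-independent.

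First I would reduce~\eqref{sist}. Adding the two equations gives $(\overline{f_0}\psi(x)+\overline{f_1}\psi(y))(1-(x+y))=0$; since $\psi>0$ on $]0,1[$ and $f_0,f_1>0$, the first factor is strictly positive, forcing $x+y=1$ (consistent with the CP manifold of Proposition~\ref{SolPe}). Substituting $y=1-x$ and using the key identity $\psi(1-x)=(1-x)x=\psi(x)$, the first equation collapses, after factoring out $\psi(x)>0$, to the \emph{linear} equation $(\overline{qf_0}+\overline{pf_1})-x(\overline{f_0}+\overline{f_1})=0$. This produces the unique candidate
\[
x^{*}=\frac{\overline{qf_0}+\overline{pf_1}}{\overline{f_0}+\overline{f_1}},\qquad y^{*}=1-x^{*}.
\]
The bounds $0<\eta\le q(t),p(t)\le\nu<1$ give $0<\overline{qf_0}+\overline{pf_1}<\overline{f_0}+\overline{f_1}$, so $x^{*}\in\,]0,1[$ and condition \textbf{[S]} holds with a single solution.

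The main work is condition \textbf{[C]}, that is, checking $\mathcal{N}(x^{*},y^{*})\neq0$; I expect this evaluation to be the only delicate point. Here I would exploit the special structure of the logistic landscape at $y^{*}=1-x^{*}$: namely $\psi(x^{*})=\psi(y^{*})=:s>0$ and $\psi'(y^{*})=1-2y^{*}=-(1-2x^{*})=-\psi'(x^{*})=:-d$. Inserting $x^{*},y^{*}$ into the two bracketed factors of $\mathcal{N}$, one finds they take the symmetric form $(\alpha d-\beta)$ and $(-\alpha d-\beta)$, with $\beta=(\overline{f_0}+\overline{f_1})s$ and $\alpha=\Delta/(\overline{f_0}+\overline{f_1})$ where $\Delta=\overline{qf_0}\,\overline{f_1}-\overline{f_0}\,\overline{pf_1}$; their product is $\beta^{2}-\alpha^{2}d^{2}$. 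The crucial observation is that the remaining cross term of $\mathcal{N}$ equals exactly $\alpha^{2}d^{2}$, so the $d^{2}$-contributions cancel and
\[
\mathcal{N}(x^{*},y^{*})=(\overline{f_0}+\overline{f_1})^{2}\big(x^{*}(1-x^{*})\big)^{2}>0.
\]
Hence $\mathcal{N}(x^{*},y^{*})\neq0$ and $\sum\operatorname{sgn}\mathcal{N}=1\neq0$, so \textbf{[C]} holds. Theorem~\ref{TeoP} then yields the $T$-periodic solution with the stated bounds (rewritten via $\eta,\nu$ in terms of $\min$/$\max$ of $Q_{00},Q_{10}$). Finally, the case $0<\eta=\nu<1$ is immediate from Theorem~\ref{TeoS}, which gives $x_0=x_1=\eta$ as the unique positive $T$-periodic solution, completing the proof.
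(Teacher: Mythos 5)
Your proposal is correct and follows the same route as the paper: reduce to Theorem~\ref{TeoP}, exhibit the unique solution $x^{*}=(\overline{qf_0}+\overline{pf_1})/(\overline{f_0}+\overline{f_1})$, $y^{*}=1-x^{*}$ of \eqref{sist} (via $x+y=1$ and $\psi(1-x)=\psi(x)$), and verify \textbf{[S]} and \textbf{[C]}. In fact you are more careful than the paper on \textbf{[C]}: the paper merely asserts it from uniqueness of the solution of \eqref{sist} (which by itself does not force $\mathcal{N}\neq0$), whereas your symmetry computation $\psi(y^{*})=\psi(x^{*})$, $\psi'(y^{*})=-\psi'(x^{*})$, giving the two factors $\alpha d-\beta$ and $-\alpha d-\beta$ and the exact cancellation of the $d^{2}$ terms so that $\mathcal{N}(x^{*},y^{*})=(\overline{f_0}+\overline{f_1})^{2}\bigl(x^{*}(1-x^{*})\bigr)^{2}>0$, checks out and actually closes that gap.
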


 \begin{proof}
This result also follows from Theorem \ref{TeoP}. The system \eqref{sist} has a unique solution in $]0,1[ \times \,]0,1[$, explicitly given by: 
$$x_0= \frac{\overline{pf_1}+\overline{qf_0}}{\overline{f_0}+\overline{f_1}},\quad  x_1=\frac{\overline{f_0}-\overline{qf_0}+\overline{f_1}-\overline{pf_1}}{\overline{f_0}+\overline{f_1}},$$
It is easy to verify that both components lie in the open interval $]0,1[$. Thus, condition \textbf{[S]} is satisfied. Furthermore, since there is a unique solution to \eqref{sist}, the condition \textbf{[C]} is also verified. The proof of the second part of the corollary follows directly from the application of Theorem \ref{TeoS}.
\end{proof}

\section*{Acknowledgment}

\bibliographystyle{abbrv}
\bibliography{Bib_Delay_Quasispecies}

\begin{thebibliography}{10}

\bibitem{Ari2016}
I.~Ben-Ari and R.~Schinazi.
\newblock A stochastic model for the evolution of a quasispecies.
\newblock {\em J. Stat. Phys.}, 162:415--425, 2016.

\bibitem{Brumera2006}
Y.~Brumera, F.~Michorb, and E.~Hakhnovich.
\newblock Genetic instability and the quasispecies model.
\newblock {\em J. Theor. Bio.}, 241:216--222, 2006.

\bibitem{Campagnola2022}
G.~Campagnola, V.~Govindarajan, A.~Pelletier, B.~Canard, and O.~Peerse.
\newblock The sars-cov nsp12 polymerase active site is tuned for large-genome
  replication.
\newblock {\em J. Virol.}, 96:e0067122, 2022.

\bibitem{Davis1999}
G.~Davis.
\newblock Hepatitis {C} virus genotypes and quasispecies.
\newblock {\em The Am. J. Med.}, 107(6):21--26, 1999.

\bibitem{Domingo1980}
E.~Domingo, M.~Davila, and J.~Ortin.
\newblock Nucleotide sequence heterogeneity of the rna from a natural
  population of foot-and-mouth-disease virus.
\newblock {\em Gene}, 11:333--346, 1980.

\bibitem{Domingo1988}
E.~Domingo and J.~A.~P. Holland.
\newblock {\em RNA geneticss}.
\newblock CRC Press, Boca Raton, 2010.

\bibitem{Domingo1985}
E.~Domingo, E.~Mart{\'\i}nez-Salas, F.~Sobrino, J.~de~la Torre, A.~Portela, and
  et~al.
\newblock he quasispecies (extremely heterogeneous) nature of viral {RNA}
  genome populations: biological relevance--a review.
\newblock {\em Gene}, 40:1--8, 1985.

\bibitem{Domingo1978}
E.~Domingo, D.~Sabo, T.~Taniguchi, and C.~Weissmann.
\newblock Nucleotide sequence heterogeneity of an {RNA} phage population.
\newblock {\em Cell}, 13:735--744, 1978.

\bibitem{Eigen1971}
M.~Eigen.
\newblock Selforganization of matter and the evolution of biological
  macromolecules.
\newblock {\em Naturwissenschaften}, 58:465--523, 1971.

\bibitem{Eigen1988}
M.~Eigen, J.~McCaskill, and P.~Schuster.
\newblock Molecular quasi-species.
\newblock {\em The Journal of Physical Chemistry}, 92(24):6881--6891, 1988.

\bibitem{Holland1979}
J.~Holland, E.~Grabau, C.~Jones, and B.~Semler.
\newblock Evolution of multiple genome mutations during long-term persistent
  infection by vesicular stomatitis virus.
\newblock {\em Cell}, 16:495--504, 1979.

\bibitem{Holland1982}
J.~Holland, K.~Spindler, F.~Horodyski, E.~Grabau, S.~Nichol, and S.~VandePol.
\newblock Rapid evolution of {RNA} genomes.
\newblock {\em Science}, 215:1577--1585, 1982.

\bibitem{Honjo2020}
M.~Honjo, N.~Emura, T.~Kawagoe, and et~al.
\newblock Seasonality of interactions between a plant virus and its host during
  persistent infection in a natural environment.
\newblock {\em ISME J.}, 14:506--518, 2020.

\bibitem{Revull2021}
M.~Llorens-Revull, J.~Gregori, C.~Dopazo, F.~Rodr\'iguez-Frias, and et~al.
\newblock Study of quasispecies complexity and liver damage progression after
  liver transplantation in hepatitis {C} virus infected patients.
\newblock {\em Genes}, 12:1731, 2021.

\bibitem{Martell1992}
M.~Martell, J.~Esteban, J.~Quer, J.~Genesca, A.~Weiner, and et~al.
\newblock Hepatitis {C} virus ({HCV}) circulates as a population of different
  but closely related genomes: quasispecies nature of {HCV} genome
  distribution.
\newblock {\em J. Virol.}, 66:39--48, 1992.

\bibitem{Mas2004}
A.~Mas, E.~Ulloa, M.~Bruguera, I.~Fur{\v c}i{\'c}, D.~Garriga, and et~al.
\newblock Hepatitis {C} virus population analysis of a single-source nosocomial
  outbreak reveals an inverse correlation between viral load and quasispecies
  complexity.
\newblock {\em J. Gen. Virol.}, 85:3619--3626, 2004.

\bibitem{Mawhin}
J.~Mawhin.
\newblock {\em Topological Degree Methods in Nonlinear Boundary Value
  Problems}, volume~40 of {\em CBMS Regional Conference Series in Mathematics}.
\newblock American Mathematical Society, 3 edition, 1979.

\bibitem{Nowak1989}
M.~Nowak and P.~Schuster.
\newblock Error thresholds of replication in finite populations mutation
  frequencies and the onset of muller's ratchet.
\newblock {\em J. Theor. Biol.}, 137:375--395, 1989.

\bibitem{Obreepalska2015}
A.~Obreepalska-Steplowska, J.~Renaut, S.~Planchon, A.~Przybylska, P.~Wieczorek,
  J.~Barylski, and P.~Palukaitis.
\newblock Effect of temperature on the pathogenesis, accumulation of viral and
  satellite rnas and on plant proteome in peanut stunt virus and satellite
  rna-infected plants.
\newblock {\em Front. Plant Sci.}, 66:506--518, 2015.

\bibitem{Perales2020}
C.~Perales.
\newblock Quasispecies dynamics and clinical significance of hepatitis {C}
  virus ({HCV}) antiviral resistance.
\newblock {\em Int. J. Antimicrob. Agents}, 56:105562, 2020.

\bibitem{Sardanyes2010}
J.~Sardany{\'e}s and S.~Elena.
\newblock Error threshold in {RNA} quasispecies models with complementation.
\newblock {\em J. Theor. Biol.}, 265:278--286, 2010.

\bibitem{Sardanyes2011}
J.~Sardany{\'e}s and S.~Elena.
\newblock Quasispecies spatial models for {RNA} viruses with different
  replication modes and infection strategies.
\newblock {\em Plos One}, 6:e24884, 2011.

\bibitem{Sardanyes2008}
J.~Sardany{\'e}s, S.~Elena, and R.~Sol{\'e}.
\newblock Simple quasispecies models for the survival-of-the-flattest effect:
  the role of space.
\newblock {\em J. Theor. Biol.}, 250:560--568, 2008.

\bibitem{Sobrino1983}
F.~Sobrino, M.~D{\'a}vila, J.~Ort{\'\i}n, and E.~Domingo.
\newblock Multiple genetic variants arise in the course of replication of
  foot-and-mouth disease virus in cell culture.
\newblock {\em Virology}, 128:310--318, 1983.

\bibitem{Sole2003}
R.~Sol{\'e}.
\newblock Phase transitions in unstable cancer cell populations.
\newblock {\em Eur. Phys. J.}, 35:117--123, 2003.

\bibitem{Sole2004}
R.~Sol{\'e} and T.~Deisboek.
\newblock An error catastrophe in cancer?
\newblock {\em J. Theor. Biol.}, 228:47--54, 2004.

\bibitem{Sole2006}
R.~Sol{\'e}, J.~Sardany{\'e}s, J.~D{\'\i}ez, and A.~Mas.
\newblock Information catastrophe in {RNA} viruses through replication
  thresholds.
\newblock {\em Journal of Theoretical Biology}, 240:353--359, 2006.

\bibitem{Swetina1982}
J.~Swetina and P.~Schuster.
\newblock Self-replication with errors. a model for polynucleotide replication.
\newblock {\em Biophys. Chem.}, 16:329--45, 1982.

\bibitem{TurnerQuasi1}
E.~A. Turner, F.~Crespo, J.~Sardany{\'e}s, and N.~Morales.
\newblock Quasispecies dynamics with time lags and periodic fluctuations in
  replication.
\newblock {\em J Math Biol}, 2025.
\newblock To appear.

\bibitem{Wilke2001b}
C.~Wilke.
\newblock Selection for fitness versus selection for robustness in {RNA}
  secondary structure folding.
\newblock {\em Evol.}, 55:2412--2420, 2001.

\bibitem{Wilke2001}
C.~Wilke, J.~Wang, C.~Ofria, R.~Lenski, and C.~Adami.
\newblock Evolution of digital organisms at high mutation rates leads to
  survival of the flattest.
\newblock {\em Nature}, 412:331--3, 2001.

\bibitem{Zapata:2022aa}
J.~L. Zapata, E.~Mart{\'\i}nez, and F.~Crespo.
\newblock Modified quasispecies model: the analysis of a periodic therapy.
\newblock {\em J Math Biol}, 84(4):29, 2022.

\end{thebibliography}

\end{document}